\begin{document}

\title{Cutting a unit square and permuting blocks}
\author{Nathan Tung}
\address{Stanford University, CA 94305, USA}
\email{ntung@stanford.edu}

\begin{abstract}
    Consider a random permutation of $kn$ objects that permutes $n$ disjoint blocks of size $k$ and then permutes elements within each block. Normalizing its cycle lengths by $kn$ gives a random partition of unity, and we derive the limit law of this partition as $k,n \to \infty$. The limit may be constructed via a simple square cutting procedure that generalizes stick breaking in the classical case of random permutations ($k=1$). The expected size of the largest part of this square cutting distribution is approximated to be $0.40$, in contrast with the Golomb-Dickman constant around $0.624$ describing the longest cycle of a uniform random permutation as well as the largest prime factor of a random integer. The distribution function of this largest part is shown to also be the mean of a certain multiplicative function. Along the way we give the first extension of the Erd\H{o}s-Turán law to a proper permutation subgroup.
\end{abstract}

\maketitle

\section{Introduction}\label{sec:intro}

Here is a beautiful and motivating connection between permutations and integers. Let $c_1,c_2,\dots$ be the random cycle lengths of a uniform random permutation from $S_n$, so $\sum_i c_i = n$. Then the normalized lengths $\cP_n = \set{\frac{c_1}{n}, \frac{c_2}{n}, \dots}$ form a random partition of unity with at most $n$ nonzero parts (nonnegative and summing to $1$). A classical result \cite{schmidt1977limit} is that
$$
\cP_n \dconv \Sigma
$$
where $\Sigma$ is the Poisson-Dirichlet distribution, supported on infinite partitions of unity (more on this in Section \ref{sec:part}). Furthermore the distribution function of the largest part $\cM$ of $\Sigma$ is given by the Dickman function $\rho$, that is
\begin{equation}\label{eq:pddickman}
    \pr{\cM \le \frac{1}{x}} = \rho(x)
\end{equation}
where $\rho$ is defined as the unique function satisfying the delay differential equation
 \begin{equation}\label{eq:delaydiff}
     u \rho'(u) + \rho(u-1) = 0
 \end{equation}
 for $u \ge 0$ with initial conditions $\rho(u) = 1$ for $u \in [0,1]$ \cite{tenenbaum2015introduction}.

 Consider now a uniformly random integer $n$ from $[N]$ and factor $n = p_1 p_2 p_3 \dots$ where $p_i$ are primes, not necessarily distinct (repeated with multiplicity). Then another classical result \cite{donngrimm} is that the random partitions of unity $\cQ_N = \set{\frac{\log p_1}{\log n}, \frac{\log p_2}{\log n}, \dots}$ are such that
 $$
\cQ_N \dconv \Sigma
 $$
There is no reason \textit{a priori} why this limit should coincide with that of $\cP_n$. Combining convergence of $\cQ_n$ and $\cP_n$ with \eqref{eq:pddickman} allows for two other characterizations of the Dickman function \cites{tao, moree2014integers}
\begin{equation}\label{eq:dickmanlimitdef}
    \rho(u) = \lim_{x \to \infty} \frac{\abs{\set{n \le x: p_1(n) \le x^{1/u}}}}{x} = \lim_{n \to \infty} \frac{\abs{\set{\sigma \in S_n: c_1(\sigma) \le n/u}}}{n!}
\end{equation}
The fact that these limit functions are the same as the solution to the delay differential equation is again not at all obvious. Surprising results like these have inspired numerous surveys such as \cites{granville2008anatomy, tao} on the anatomy of integers and permutations and even a graphic novel \cite{primesuspects}.

The program of this paper is to shed further light upon these mysterious connections by abstracting away the self-similar structure that underlies random permutations and integers, as initiated in \cite{tao}. In doing so we are able to generalize all the above results. Our generalization of a random permutation will be those with the block structure described in the abstract. The limit of the partitions they define is describable by a simple process of randomly cutting a unit square. It then becomes interesting to explore features of this square cutting distribution $\cP$, as a better understanding of it leads directly to a better understanding of related discrete objects like permutations and integers. 

We focus on the largest part $\cN$ of $\cP$ (analogous to the largest part $\cM$ of $\Sigma$), and show that its distribution function $\pi$ (analogous to $\rho$) can be characterized as the mean of a multiplicative function (analogous to \eqref{eq:dickmanlimitdef}). The distribution function $\pi$ is then numerically approximated to show $\eta \coloneqq \EE \cN \approx 0.40$, meaning that the longest cycle of a random block permutation on $kn$ elements is about $0.40kn$ in expectation for large $k$ and $n$. This should be seen as an analog of the Golomb-Dickman constant $\lambda \coloneqq \EE \cM \approx 0.624$, meaning that the longest cycle of a random permutation on $n$ elements is $0.624n$ in expectation. Finally we generalize further the convergence result and analysis of large parts to the regime where $k$ is fixed, $n$ is large, and the permutations of elements within blocks need not be uniformly sampled. This fills a gap left open by \cite{diaconis2024poisson} where limits of cycle counts and various patterns are derived but their methods are unable to address large cycles. 

\subsection{Partitions of unity and the Poisson-Dirichlet distribution}\label{sec:part}

First we motivate and state the main result, Theorem \ref{thm:sqconv}. Our story starts with the stick breaking interpretation of the Poisson-Dirichlet (PD) distribution on partitions of unity (nonnegative numbers summing to 1, defined more carefully in Section \ref{sec:convpf}), which we will denote $\Sigma$. The PD distribution was generalized to a two-parameter family in \cite{pitman1997two} and continues to appear in recent work on spatial random permutations such as the interchange process \cite{elboim2022infinite}. It is also the equilibrium measure for certain coagulation and fragmentation processes \cite{pitman2006combinatorial}. To construct the PD distribution, consider the following process: break a stick of length 1 uniformly, set the left piece aside, and keep repeating the process on the right piece (see Figure \ref{fig:breakcut} left). We do not recommend trying this at home as you will never finish: almost surely all left pieces after every break has positive width so the process continues indefinitely. The PD distribution is thus supported on partitions with infinitely many parts. The randomness for break points can be sampled as $U_1, U_2, \dots$ iid $U[0,1]$ and the stick lengths are then 
\begin{equation}\label{eq:stickprodstruct}
    \Sigma = \set{U_1, (1-U_1)U_2, (1-U_1)(1-U_2)U_3, \dots}
\end{equation}
We will think of this random partition as being characterized by a self-similarity property
\begin{equation}\label{eq:gemself}
    \Sigma = \set{U,(1-U)\Sigma}
\end{equation}
in distribution, where $U \sim U[0,1]$ independent of $\Sigma$. The square cutting process to follow may be thought of as a two dimensional analog of this stick breaking.

\begin{figure}[h]
    \centering
    \includegraphics[scale=0.45]{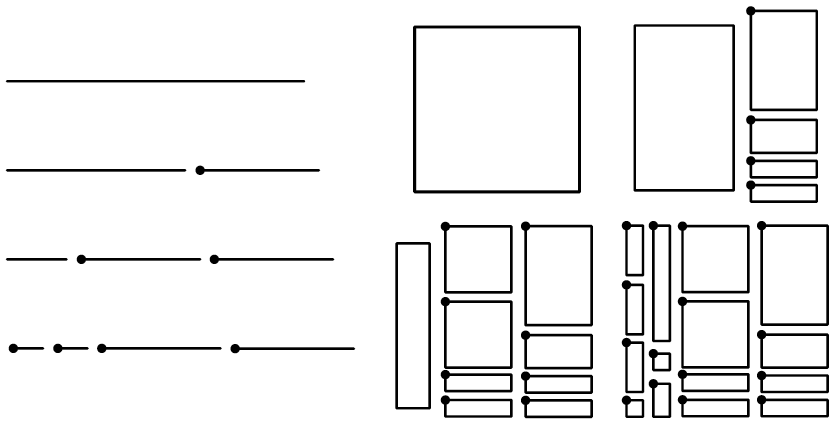}
    \caption{Stick breaking construction of the Poisson-Dirichlet and square cutting construction of $\cP$. Each step of square cutting consists of vertically cutting the leftmost rectangle as well as horizontally cutting the new piece.}
    \label{fig:breakcut}
\end{figure}

\subsubsection{Square Cutting}\label{sec:sqcut}

Consider the following random process that starts with a unit square and ends with infinitely many rectangles. First slice the square starting at a uniform point on the top edge down towards the bottom edge, resulting in two rectangles. Then set aside the right rectangle, and keep repeating on the left (slicing from top to bottom at a uniformly random point). Concurrently have a friend take the set aside rectangles and carry out the same process horizontally on each. That is, your friend slices horizontally from a uniform point on the left edge to the right edge, sets aside the top piece, and repeats on the bottom (see Figure \ref{fig:breakcut} right). The final areas of all rectangles are a random partition of unity from what we call the square cutting distribution and denote $\cP$.

As with stick breaking, the randomness can be sampled by $\set{U_i}_{i=1}^\infty$ to represent the vertical slices and $\set{V_{i,j}}_{i,j=1}^\infty$ for the horizontal, all iid $U[0,1]$ random variables. Encoding the above description the areas of the final pieces are
\begin{align}
    \cP = \{&U_1V_{1,1}; U_1(1-V_{1,1})V_{1,2}; U_1(1-V_{1,1})(1-V_{1,2})V_{1,3}; \dots \label{eq:squareprodstruct}\\
    &(1-U_1)U_2V_{2,1}; (1-U_1)U_2(1-V_{2,1})V_{2,2}; \dots \nonumber \\
    &(1-U_1)(1-U_2)U_3V_{3,1}; \dots \} \nonumber
\end{align}
which exhibits the self-similar structure
\begin{equation}\label{eq:sqself}
    \cP \overset{d}{=} \set{U\Sigma,(1-U)\cP}
\end{equation}
where $U \sim U[0,1]$ and $\Sigma$ is PD distributed, both independent of $\cP$. 

\subsection{Main Results}

Permutations acting on blocks as described in the abstract are known to algebraists as wreath products, which are a certain semidirect product. These terms will only be used at the level of their definitions; no algebra will be needed. Elements $\sigma$ of the wreath product subgroup $S_k^n \rtimes S_n \le S_{kn}$ will be represented as
$$
\sigma = (g_1,\ldots,g_n;h),\qquad g_i\in S_k,\ h\in S_n
$$
This permutes $\{1,\dots,kn\}$ with $g_1$ permuting $\{1,\dots,k\}$, $g_2$ permuting $\{k+1,\dots,2k\}$, $\ldots$, $g_n$ permuting $\{(n-1)k+1,\dots,kn\}$ independently and then $h$ permuting the $n$ blocks. For example $\sigma = ((12),(1)(2),(12);(312)) \in S_2^3 \rtimes S_3$ permutes $1 2 \vert 3 4 \vert 5 6$ first to $2 1 \vert 3 4 \vert 6 5$ and then to $6 5 \vert 2 1 \vert 3 4$. In the usual two-line notation for permutations,
\begin{center}\begin{tabular}{cccccc}
1&2&3&4&5&6\\
6&5&2&1&3&4
\end{tabular}\end{center}
has cycle decomposition $(164)(253)$. Permutation wreath products appear in many areas of mathematics and are often of great interest \cite{diaconis2024poisson}. 

Our main theorem consists of several results that mirror the story told in the previous subsection
\begin{theorem}[Square cutting]\label{thm:sqconveasy}
    Let $\Sigma,\cP$ be partitions of unity defined by
    $$
    \Sigma = (U,(1-U)\Sigma), \quad \cP = (\tilde{U}\Sigma, (1-\tilde{U})\cP)
    $$
    in distribution where $U,\tilde{U}$ are independent and uniformly distributed in the unit interval. Take $\sigma \in S_k^n \rtimes S_n$ uniformly and let $C_1 \ge C_2 \ge \dots$ denote the cycle lengths of $\sigma$. Let $C = \bigp{\frac{C_1}{kn},\frac{C_2}{kn}, \dots}$. Then
    $$
    C \dconv \cP
    $$
    as $k,n \to \infty$. Letting $\cN$ denote the area of the largest piece in $\cP$, $\cN$ has distribution function $\pr{\cN \le u} = \pi(1/u)$ where $\pi$ is defined by
    $$
    \pi(u) \coloneqq \frac{1}{u} (\pi \ast \rho)(u)
    $$
    for $u > 1$ with $\pi(u) = 1$ for $u \in [0,1]$. Consequently the largest cycle behaves in expectation as
    $$
    \lim_{k,n \to \infty} \frac{\EE C_1}{kn} = \eta \coloneqq \EE \cN \approx 0.40
    $$
\end{theorem}
The observant reader may note that $\cP$ is implicitly assumed to have nonincreasing entries, more on this in Section \ref{sec:mainproof}. A closed form expression for $\eta$ is given in Section \ref{sec:large}. To establish a number theoretic connection, recall that the Dickman function $\rho$ can be characterized either through the largest cycle of a random permutation or in terms of smooth integers (integers with small prime factors) \eqref{eq:dickmanlimitdef}. In light of this one may hope to make a similar statement involving $\pi$ which has shown up in the largest cycle of a wreath product permutation. While it would be nice to have such a probabilistic description of $\pi$, it seems the most immediate generalization of this idea lies in thinking of the density of smooth integers as the mean of a multiplicative function. Defining $h_x(p) = 1_{p \le x}$ for $p$ prime and $h_x(nm) = h_x(n)h_x(m)$ for coprime $n,m$, \eqref{eq:dickmanlimitdef} reads
$$
\rho(u) = \lim_{x \to \infty} \frac{1}{x^u} \sum_{n \le x^u} h_x(n)
$$
Here is the analog for $\pi$.
\begin{theorem}[Mean of a multiplicative function]\label{thm:mf}
    Let $f_x(p) = \rho\bigp{\frac{\log p}{\log x}}$ for prime $p$ and extend to $\NN$ totally multiplicatively. Let $\pi$ be as in Theorem \ref{thm:sqconveasy}. Then for $u \ge 0$
    $$
    \pi(u) = \lim_{x \to \infty} \frac{1}{x^u} \sum_{n \le x^u} f_x(n).
    $$
\end{theorem}

The convergence statement in Theorem \ref{thm:sqconveasy} follows directly from a more precise result giving explicit rates. Let $\Delta \subset [0,1]^\NN$ be the space of partitions of unity and $\ell$ a metric on $\Delta$. If $X,Y$ are two random partitions with laws $\mu,\nu$ respectively let
\begin{align*}
    d_\ell(X,Y) &\coloneqq \sup \set{\int_\Delta f(x) d(\mu - \nu)(x): f \text{ continuous and Lip}_\ell (f) \le 1}\\
    &= \inf \EE_{(X,Y)} \ell(x,y)
\end{align*}
be the Wasserstein or transport metric, where the infimum is over all couplings $(X,Y)$.

\begin{theorem}[Rates of convergence]\label{thm:sqconv}
    With notation as in Theorem \ref{thm:sqconveasy}
    $$
    d_{L^\infty}(C,\cP) \le 1/k + 1/n
    $$
    and
    $$
    \frac{\log^2 (kn)}{8kn} \le d_{L^1}(C,\cP) \le \frac{2(\log k + \gamma)}{k} + \frac{2(\log n + \gamma)}{n} + O(1/k^2 + 1/n^2)
    $$
    where $\gamma$ is the Euler-Mascheroni constant and the implied constant is universal.
\end{theorem}

Note that there is no hope of decay in total variation distance between $C$ and $\cP$ as $C$ is supported on rationals almost surely for any $k,n$ but $\cP$ consists almost surely of irrational areas. Thus Wasserstein distance seems a natural choice for these rates. The proof in Section \ref{sec:convpf} actually shows that the $L^\infty$ bound is uniform. That is, under the coupling
$$
\norm{C - \cP}_\infty \le 1/k + 1/n
$$
always. This can be used to bound $d_{L^p}(C,\cP)$ for any $p > 1$ by pulling out the largest term repeatedly until one reaches $d_{L^1}(C,\cP)$.

Using the same coupling we also give a convergence result for the non-normalized cycle counts in Appendix \ref{app} to compound Poisson limits. Without normalization the cycle lengths constitute integer partitions, well-studied objects that in fact gave rise to the Hardy-Littlewood circle method \cite{hardy1918asymptotic}. Compound Poisson limits show up in other places and can be established with an extension of Stein's method \cite{barbour1992compound}, although ours comes from elementary Poisson splitting.

It is worth remarking that the square cutting distribution $\cP$ is actually the PD fragmentation kernel applied to the PD distribution, as defined in \cite{pitman2006combinatorial}*{Section 5}, and falls within the aforementioned two-parameter PD family. The theory of fragmentation processes is much more general, with other combinatorial constructions. For example the same section of \cite{pitman2006combinatorial} shows that the partition defined by the directed graph of a random mapping from $[n]$ to $[n]$ converges to a limit in this two-parameter PD family. In fact the phenomenon that we have been calling self-similarity seems to essentially be the same as one studied extensively in fragmentation processes \cites{bertoin2002self,bertoin2006random}. There is almost certainly further work to be done in this direction. For instance it is known that the cycle partition of a permutation from the Ewens distribution with parameter $\theta$ converges to $\text{PD}(0,\theta)$. It may be the case that the partition from a random wreath permutation where $g_i$ and $h$ are Ewens distributed with parameters $\theta_1,\theta_2$ converges to $\text{PD}(0,\theta_1)$-frag applied to $\text{PD}(0,\theta_2)$ (see Theorem \ref{thm:sqconv}). Even if this is the case, Pitman has communicated to the author that this limit is hard to describe using the typical representations of random partitions.

\subsection{Structure of the paper}\label{sec:struct}
Section \ref{sec:erdtur} extends the Erd\H{o}s-Turán law for the least common multiple of cycle lengths to wreath product subgroups. Section \ref{sec:large} derives the distribution function $\pi$ of the largest part of $\cP$ through self-similarity. It also contains the numerical analysis yielding the constant $\eta \approx 0.40$, touching on connections with the Golomb-Dickman constant. Section \ref{sec:mf} proves Theorem \ref{thm:mf}. With cycle statistics being a strong motivator for studying random partitions like $\pi$ we then extend results to general wreath products of the form $\Gamma^n \rtimes S_n$ which correspond to choosing the permutation of each block uniformly from $\Gamma \subseteq S_k$. These include an extension of the main convergence result to more general random partitions in Section \ref{sec:convpf} (where Theorem \ref{thm:sqconv} is proved) and analysis of the $k$-th largest part for $k \ge 1$ in Section \ref{sec:largeext}.

\subsection{Acknowledgements}

The author thanks Persi Diaconis for encouraging the pursuit of quantitative results and helpful discussions, Jim Pitman for pointing out the references \cites{bertoin2002self,bertoin2006random}, and Kannan Soundararajan for help making the connection to multiplicative functions rigorous.

\section{Erd\H{o}s-Turán law for wreath products}\label{sec:erdtur}

As a warmup we investigate the product structure of cycles in wreath permutations and use it to extend a result of Erd\H{o}s and Turán giving a lognormal distribution for the least common multiple of cycle lengths \cite{erdHos1967some}. Their result has seen generalization to the Ewens sampling distribution \cite{arratia1992limit} but this is the first extension to a uniform permutation from a proper nontrivial subgroup (in fact we do not even require the subset to be a group).

Considering $\sigma = (g_1,\dots,g_n;h) \in S_k^n \rtimes S_n$ its cycle lengths are given by products of cycle lengths involving the $g_i$ and $h$. Consider an $m$-cycle $(1h(1)h^2(1)\dots h^m(1))$ in $h$, where $h^i$ is $h$ composed with itself $i$ times. Then taking $g_1 \circ g_{h(1)} \circ g_{h^2(1)} \circ \dots \circ g_{h^m(1)} \in S_k$ we may decompose it into cycles of length $G_1,\dots,G_l$ and these combine with the $m$-cycle in $h$ to make cycles of lengths $mG_1,\dots,mG_l$ in $\sigma$. This happens for all cycles in $h$ and corresponding compositions of $g$'s. In the example from Section \ref{sec:intro} the only cycle in $h$ is the $3$-cycle $(312)$. Then $g_3 \circ g_1 \circ g_2 = (12) \circ (12) \circ (1)(2) = (1)(2)$ decomposes into two fixed points and the final cycle lengths, two $3$-cycles, are both given as $3 * 1$. Applying this to a simple coupling gives the following. All logarithms throughout the paper are natural.
\begin{theorem}[Wreath product Erd\H{o}s-Turán]\label{thm:erdtur}
    Let $\Gamma \subseteq S_k$ be arbitrary and $C_1,C_2,\dots$ the cycle lengths in a uniform permutation from $\Gamma^n \rtimes S_n$. Let $O_n = \lcm\set{C_1,C_2,\dots}$. Then
    $$
    \frac{\log O_n - \frac{1}{2}\log^2 n}{\sqrt{\frac{1}{3} \log^3 n}} \dconv N(0,1)
    $$
    as $n \to \infty$ for fixed $k$. If $\Gamma = C_k$, the result holds for $k = e^{o(\log^{3/2} n)}$.
\end{theorem}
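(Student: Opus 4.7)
The plan is to reduce to the classical Erdős-Turán law for a uniform $h \in S_n$ by sandwiching $O_n$ between two quantities that differ by a negligible factor. Let $M = \lcm\set{m : m \text{ is a cycle length of } h}$ denote the order of $h$. By the cycle decomposition recalled just above the theorem, every cycle of $\sigma = (g_1,\ldots,g_n;h)$ has length $m\cdot G$, where $m$ is a cycle length of $h$ and $G$ is a cycle length of the associated composition of $g_i$'s in $\Gamma$. Since such $G$ is a cycle length of some permutation in $S_k$ it satisfies $G\le k$ and hence $G \mid L := \lcm\set{1,2,\ldots,k}$; when $\Gamma = C_k$ every element of $\Gamma$ has order dividing $k$, so in fact $G \mid k$ and we may replace $L$ by $k$. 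In either case this yields the divisibility chain
\[
M \,\mid\, O_n \,\mid\, M\cdot L,
\]
the left divisibility because each cycle length $m$ of $h$ is a factor of some cycle length $mG$ of $\sigma$, and the right because each such $mG$ divides $M\cdot L$.

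Taking logarithms, $\log M \le \log O_n \le \log M + \log L$. Under uniform sampling from $\Gamma^n \rtimes S_n$ the $S_n$-marginal is uniform on $S_n$ (each fiber of the projection has the same size $|\Gamma|^n$), so the classical Erdős-Turán theorem \cite{erdHos1967some} applied to $h$ yields
\[
\frac{\log M - \tfrac{1}{2}\log^2 n}{\sqrt{\tfrac{1}{3}\log^3 n}} \dconv N(0,1).
\]
Slutsky's theorem then upgrades this to the same CLT for $\log O_n$ provided the gap $\log L$ is $o(\log^{3/2} n)$. For arbitrary $\Gamma \subseteq S_k$ with $k$ fixed, $\log L = O_k(1)$ is trivially negligible, and for $\Gamma = C_k$ one instead uses $\log L = \log k = o(\log^{3/2} n)$, which is exactly the stated hypothesis $k = e^{o(\log^{3/2} n)}$.

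I do not foresee a real obstacle: the probabilistic heavy lifting is done by the classical Erdős-Turán result, and the rest is an elementary algebraic sandwich. The only place the argument must be sharp is in using $L = k$ rather than $\lcm\set{1,\ldots,k}$ in the cyclic case, since by the prime number theorem $\log\lcm\set{1,\ldots,k} = \psi(k) \sim k$ and the cruder bound would force $k = o(\log^{3/2} n)$, exponentially weaker than the claimed regime.
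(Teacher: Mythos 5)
Your proof is correct and takes essentially the same approach as the paper: both sandwich $\log O_n$ between $\log M$ (the order of $h$, called $P_n$ in the paper) and $\log M + \log L$, invoke the classical Erd\H{o}s--Tur\'an law for the uniform $S_n$-marginal $h$, and observe the error $\log L$ is $O_k(1)$ in general and $\log k$ when $\Gamma = C_k$. Your divisibility chain $M \mid O_n \mid M\cdot L$ is a cleaner way to state the sandwich, and your remark that $\log\lcm\{1,\dots,k\} \sim k$ would spoil the $k = e^{o(\log^{3/2}n)}$ regime explains why the cyclic case genuinely needs $L = k$, a point the paper leaves implicit.
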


\begin{proof}
    Let the permutation in the theorem be $\sigma = (g_1,\dots,g_n;h)$ and $P_n$ be the least common multiple of cycle lengths of a uniform $\pi \in S_n$. The original result of Erd\H{o}s and Turán then gives the exact same limit law with $P_n$ in place of $O_n$ \cite{erdHos1967some}. We can then couple $O_n$ and $P_n$ by setting $h = \pi$ so that
    $$
    \log P_n \le \log O_n \le \log \bigp{k!P_n} = \log P_n + O_k(1)
    $$
    which gives the result. The second inequality is because each permutation in $S_k$ of the form $g_1 \circ g_{h(1)} \circ \dots$ decomposes into cycles of length $G_1,G_2 \dots \le k$. These cycle lengths then act as multipliers on the cycle lengths of $h=\pi$ to get the cycle lengths over which $O_n$ is the least common multiple. When multiplying lengths in $h= \pi$ by $G_1,G_2,\dots$, their least common multiple will increase by a factor of at most $\lcm([k]) \le k! = O_k(1)$. If $\Gamma = C_k$ then
    $$
    \log P_n \le \log O_n \le \log P_n + \log k
    $$
    by the above reasoning but since now $G_1,G_2,\dots$ all divide $k$. Then if $\log k = o(\log^{3/2} n)$ the error term converges to $0$ when normalized by $\sqrt{\frac 1 3 \log^3 n}$.
    
\end{proof}

It would be interesting to see which subgroups $\Gamma \le S_k$ allow one to take $k$ large with $n$ in the above theorem, and how large. Let $\lcm(\sigma)$ be the least common multiples of cycle lengths of a permutation $\sigma$. Then for instance with $\Gamma = S_k$ one has
$$
O_n = \lcm(C_1 * \lcm(\pi_1), C_2 * \lcm(\pi_2), \dots)
$$
where $\pi_1, \pi_2, \dots$ are iid uniform from $S_k$ and $C_1,C_2,\dots$ are the cycle lengths in a uniform permutation from $S_n$. It is not clear how large $k$ can be taken in this case.

\section{Largest part and integral equations}\label{sec:large}

This section derives the distribution of the largest part of the square cutting distribution $\cP$ as defined in Theorem \ref{thm:sqconveasy}. First recall from Section \ref{sec:part} that the Dickman function $\rho$ describes the distribution of the largest part of the PD distribution $\Sigma$. Thus this result is essentially determining the analog of $\rho$ for $\cP$. 

\begin{theorem}[Largest piece from square cutting]\label{thm:sqlarge}
    Let $\cN$ be the area of the largest piece in $\cP$. Then $\cN$ has distribution function $\pr{\cN \le u} = \pi(1/u)$ where $\pi$ is defined by
    $$
    \pi(u) \coloneqq \frac{1}{u} (\pi \ast \rho)(u)
    $$
    for $u > 0$ with $\pi(u) = 1$ for $u \in [0,1]$.
\end{theorem}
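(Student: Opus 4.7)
The plan is to exploit the self-similarity \eqref{eq:sqself} to derive a functional equation for the distribution function of $\cN$ and match it to the stated convolution identity.

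From \eqref{eq:sqself}, if we write $\cP = \{U\Sigma,\,(1-U)\cP^\ast\}$ in distribution with $\cP^\ast$ an independent copy of $\cP$, the maximum of the right-hand multiset is the maximum of the two rescaled maxima. Therefore, in distribution,
$$
\cN = \max\bigl(U L_\Sigma,\,(1-U)\cN^\ast\bigr),
$$
where $L_\Sigma$ is the largest part of $\Sigma$ and $\cN^\ast$ is an independent copy of $\cN$. A classical result (Vershik, Watterson) identifies $L_\Sigma$ through the Dickman function: $P(L_\Sigma \le y) = \rho(1/y)$ for $y \in (0,1]$. Using the convention $\rho(x) = 1$ for $x \in [0,1]$, this gives $P(tL_\Sigma \le 1/u) = \rho(tu)$ uniformly for $t \in (0,1]$ and $u > 0$.

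Define $\pi(u) = P(\cN \le 1/u)$ for $u > 0$; since $\cN \le 1$ almost surely, $\pi \equiv 1$ on $(0,1]$, matching the initial condition in the theorem. Conditioning on $U = t$ and using the independence of $L_\Sigma$ and $\cN^\ast$ from $U$,
$$
\pi(u) = \int_0^1 P(tL_\Sigma \le 1/u)\,P((1-t)\cN^\ast \le 1/u)\,dt = \int_0^1 \rho(tu)\,\pi((1-t)u)\,dt.
$$
The substitution $s = tu$ turns this into $\pi(u) = \frac{1}{u}(\pi \ast \rho)(u)$, which is exactly the stated equation. Given the initial condition $\pi \equiv 1$ on $[0,1]$, this delay-type equation determines $\pi$ uniquely on each interval $[n,n+1]$ by induction on $n$, in the standard manner for Dickman-type equations, so that $F(u) = P(\cN \le u) = \pi(1/u)$ is the distribution function of $\cN$. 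The main obstacle is locating or reproving the precise form of the Vershik--Watterson identity $P(L_\Sigma \le y) = \rho(1/y)$ for the Poisson--Dirichlet maximum; once that is in hand, the proof is essentially just unpacking \eqref{eq:sqself} and the conditional independence of $U L_\Sigma$ and $(1-U)\cN^\ast$ given $U$.
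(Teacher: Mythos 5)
Your proposal is correct and follows essentially the same route as the paper's proof: condition on $U$, use the self-similarity $\cN = \max(UL_\Sigma, (1-U)\cN^\ast)$ with independence, invoke the Dickman identity for the Poisson--Dirichlet maximum, and change variables to obtain the convolution equation. The only cosmetic difference is that the paper also notes the Dickman identity for $L_\Sigma$ can itself be rederived by running the same argument with $\cN$ replaced by $\cM$ and \eqref{eq:gemself} in place of \eqref{eq:sqself}, whereas you cite it directly.
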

\begin{proof}
Let $\cM$ be the largest element of $\Sigma$ (length of the longest stick). Then
\begin{align*}
    \phi(x) &\coloneqq \pr{\cN \le 1/x} = \pr{\max(U\cM,(1-U)\cN) \le 1/x} \\
    &= \int_0^1 \pr{\max(t\cM,(1-t)\cN) \le 1/x} dt\\
    &= \int_0^1 \pr{\cM \le \frac{1}{xt}}\pr{\cN \le \frac{1}{x(1-t)}} dt\\
    &= \int_0^1 \rho(xt) \phi(x(1-t)) dt\\
    &= \frac{1}{x} \int_0^x \rho(y) \phi(x-y) dy
\end{align*}
which is exactly the defining property of $\pi$, so $\phi=\pi$. The penultimate equality is \eqref{eq:pddickman}, which can in fact be proved by running this exact proof and noting that
$$
\rho(u) = \frac{1}{u}\int_{u-1}^u \rho(t) dt = \frac{1}{u}\int_{0}^u \rho(t) 1_{[0,1]}(u-t) dt = \frac{1}{u}(\rho \ast 1_{[0,1]})(u).
$$
One may then use \eqref{eq:gemself} instead of \eqref{eq:sqself} and the exact same steps to verify that $\pr{\cM \le 1/x}$ satisfies this convolution equation.
\end{proof} The proof illustrates how self-similarity properties like \eqref{eq:gemself} and \eqref{eq:sqself} naturally lead to a convolutional structure on the distribution of large parts of partitions. It is then of interest to study general distribution functions of this convolutional form. While a notion of self-similarity is mentioned in \cite{tao} and the definition of $\rho$ as the solution to a delay differential equation \eqref{eq:delaydiff} can be seen in this way, the author is not aware of a direct connection through convolution. Viewing $\rho$ as defined by
$$
\rho(u) = \frac{1}{u}(\rho \ast 1_{[0,1]})(u)
$$
places it in a family of solutions to what is called integral equations. For any function $\psi: [0,\infty) \to [0,1]$ with $\psi(x) = 1$ for $x \in [0,1]$, consider the function $\phi$ defined by
\begin{equation}\label{eq:def}
    \phi(x) = \frac{1}{x}(\phi \ast \psi)(x) = \frac{1}{x}\int_{0}^x \phi(t)\psi(x-t)dt
\end{equation} 
with $\phi(x) = 1$ for $x \in [0,1]$. Such equations arise in the study of the spectrum of multiplicative functions in \cite{granville2001spectrum}, where basic existence and uniqueness properties are established. These also show up as the stationary distributions of iterated random functions as in \cite{diaconis1999iterated}. Until now they are absent in the literature on the Dickman function or connections between permutations and integers. Dickman's function $\rho$ and $\pi$ above are examples, and $\pi$ can be thought of as one step above $\rho$ in a self-similarity hierarchy: $\pi$ is defined via convolution with $\rho$ which is in turn defined via convolution with $1_{[0,1]}$. 

It would be interesting to study distributions with density proportional to the solution of an integral equation (as opposed to distribution function as we have seen so far). This has been done in the case of the Dickman function and there are many elegant and fundamental constructions of such a distribution, some even relating to cycles of permutations, as well as established properties like infinite divisibility \cite{molchanov2020dickman}. Essentially from definition, distributions with density proportional to the solution of an integral equation have characteristic (random) operators that allow for Stein's method. If $X$ has density $\phi$ and $Y$ has density $\psi$ related as in (\ref{eq:def}) then
$$
\EE[Xf(X)] = \int_0^\infty tf(t)\phi(t)dt = \int_0^\infty f(t) (\phi \ast \psi)(t) dt = \EE[f(X+Y)]
$$
Applying this to the laplace transform $L_X(s) = \EE e^{-sX}$ gives the self-similarity property
\begin{equation*}\label{eq:lap}
    L'_X(s) = -L_Y(s)L_X(s)
\end{equation*}
which can be used to derive the transform of random variables with Dickman or $\pi$ density. This is essentially the fact that the Fourier transform takes convolution to multiplication.

As with the Dickman function $\rho$, saying anything concrete about $\pi$ from its recursive definition is a challenge. As a first stab at numerical analysis we use a slightly shifted Euler method with step size $0.001$ to solve the delay differential equation \eqref{eq:delaydiff} and approximate values of $\rho(t)$ for $1 \le t \le 200$ (with $\rho(t) = 1$ for $t \in [0,1]$). These values are then used to approximate values of $\pi(t)$ also for $1 \le t \le 200$, using the composite trapezoidal rule in a discrete approximation of the convolution integral. Figure \ref{fig:plot} plots the approximations and Table \ref{tab:data} provides some explicit values.

\newlength{\graphicwidth}
\newlength{\tablewidth}
\newlength{\myheight}
\setlength{\graphicwidth}{0.55\textwidth}  
\setlength{\tablewidth}{0.45\textwidth}    
\setlength{\myheight}{3cm}                

\begin{table}[h]
\begin{minipage}[c]{.6\textwidth}
    \centering
    \begin{tabularx}{\linewidth}{|c|X|X|X|}
      \hline
      \rowcolor{gray!30}
      $u$ & $\rho(u)$ & $\tilde{\rho}(u)$ & $\tilde{\pi}(u)$ \\ \hline
      1    & $1$  & $1$    & $1$   \\ \hline
      2    & $0.3069$    & $0.3071$    & $0.7594$    \\ \hline
      3    & $0.04861$    & $0.04871$    & $0.4247$    \\ \hline
      4    & $0.004911$  & $0.004930$    & $0.1969$   \\ \hline
      5    & $0.0003547$ & $0.0003567$  & $0.07996$ \\ \hline
      6    & $1.965 \times 10^{-5}$  & $1.980 \times 10^{-5}$  & $0.02932$ \\ \hline
      7    & $8.746 \times 10^{-7}$ & $8.830 \times 10^{-7}$  & $0.009904$ \\ \hline
      8    & $3.232 \times 10^{-8}$ & $3.271 \times 10^{-8}$ & $0.003125$ \\ \hline
      9    & $1.016 \times 10^{-9}$ & $1.031 \times 10^{-9}$  & $0.0009304$ \\ \hline
      10   & $2.77 \times 10^{-11}$    & $2.816 \times 10^{-11}$    & $0.0002634$    \\ \hline
    \end{tabularx}

    \vspace{1ex} 
    
    \begin{tabularx}{\linewidth}{|X|X|X|}
      \hline
      \rowcolor{gray!30}
       $\lambda$ & $\tilde{\lambda}$ & $\tilde{\eta}$ \\ \hline
       $0.62433000$    & $0.62437140$    & $0.39688143$ \\ \hline
    \end{tabularx}
  \end{minipage}
  \caption{Function and constant approximations, where $\tilde{\cdot}$ denotes our approximated values of Dickman's function $\rho$, the Golomb-Dickman constant $\lambda$, and $\pi, \eta$ the analogous function and constant related to the square-cutting distribution $\cP$. True (better approximated) values of $\rho$ are taken from Wikipedia.} \label{tab:data}
\end{table}

\begin{figure}[h]
    \centering
    \includegraphics[width=.75\textwidth]{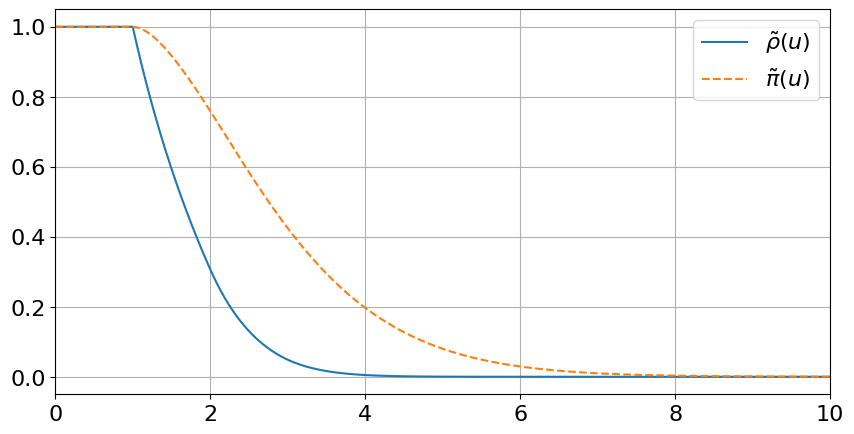}
    \caption{Function approximations for values $0 \le u \le 10$.}
    \label{fig:plot}
\end{figure}


    

Using these values of $\pi$ we then approximate the expectation of the largest part $\cN$ of the square cutting distribution. Explicitly we use the layer cake formula for expectation
$$
\eta \coloneqq \EE \cN = \int_0^\infty 1-\pr{\cN \le t} dt = \int_0^1 1-\pi(1/t) dt = \int_1^\infty \frac{1-\pi(t)}{t^2} dt
$$
and another discrete approximation with the trapezoidal rule is used to approximate
$$
\int_1^{200} \frac{1-\pi(t)}{t^2} dt \approx 0.39188143
$$
Since $\pi(200) \approx 5.99 \times 10^{-20}$ the remaining tail of the integral is around $\int_{200}^\infty t^{-2} dt = 1/200$, and adding this in gives $\EE \cN \approx 0.39688143$. Taking into account the fact that the approximated values for $\rho$ seem to be slightly biased upwards compared to the true values one may also expect a positive bias in the approximations of $\pi$ (though this is not rigorous). This could translate to a slight negative bias in the integrand of $\EE \cN$ and an underestimate of this constant. Overall given the various sources of inaccuracy we simply report $0.40$ and hope that the error is not so severe as to really make the constant outside $[0.395, 0.405)$. The same approximation of $\rho$ and same trapezoidal rule approximate the Golomb-Dickman constant to high accuracy (see Table \ref{tab:data}), which provides some confidence.

In contrast the expected length of the largest part in the PD distribution $\Sigma$ is known as the Golomb-Dickman constant $\lambda$ and is around $0.624$. If $a_n$ is the largest cycle of a random permutation from $S_n$ and $p_1(n)$ the largest prime factor of the integer $n$, then on account of the classical convergence results for cycles and prime factors to $\Sigma$ in Section \ref{sec:part} this constant is both 
$$
\lambda = \lim_{n \to \infty} \frac{\EE a_n}{n}
$$
and
$$
\lambda = \lim_{N \to \infty} \EE_{n \in [N]} \frac{\log p_1(n)}{\log n}
$$
There is however yet another surprising number theoretic characterization
$$
\lambda = \lim_{N \to \infty} \PP_{n \in [N]}\bigp{p_2(n) \le \sqrt{p_1(n)}}
$$
where $p_2(n)$ is the second largest prime factor and the probability is over uniform $n \in [N]$. Theorem \ref{thm:sqconv} implies that if $\eta \approx 0.40$ is the constant above and $b_{k,n}$ is the largest cycle of a random permutation from $S_k^n \rtimes S_n$ then
$$
\eta = \lim_{k,n \to \infty} \frac{\EE b_{k,n}}{kn}
$$
It would be interesting to find analogs of the number theoretic characterizations for $\eta$. While we do not know how to do this, we establish a number theoretic characterization of $\pi$ in the next section.

\section{A multiplicative function (proof of theorem \ref{thm:mf})}\label{sec:mf}

We establish a way to approximate a sum over primes by an integral. First we need the de la Vallée Poussin approximation to the prime counting function.

\begin{lemma}[\cite{primedisting}*{Theorem 23}]\label{lem:primapprox}
    Let $\tau(x) \coloneqq \abs{\set{p: p \le x}}$ be the prime counting function. Then
    $$
    \tau(x) = \int_{2}^x \frac{dt}{\log t} + O(xe^{-a\sqrt{\log x}})
    $$
    where $a$ is an absolute positive constant.
\end{lemma}

\begin{lemma}[Integral approximation]\label{lem:intapprox}
    For large $x$ and an index function $c: \RR_+ \to \RR_+$, let $g_{c(x)}: [1,\infty] \to [0,1]$ be a family of continuous functions where $g_{c(x)}$ is differentiable everywhere except possibly at $c(x)$. Assume further that $\lim_{y \to \infty} g_{c(x)}(y) = 0$, $\int_{c(x)}^\infty g_{c(x)}(t) dt < \infty$, $g_{c(x)}(x) = o\bigp{e^{\eps \sqrt{\log x}}}$, and $g_{c(x)}(2) = o\bigp{e^{\eps \sqrt{\log x}}}$ for every fixed positive $\eps$. Then 
    $$
    \sum_{p \le x} g_{c(x)}(p) = \int_2^{x} \frac{g_{c(x)}(t)}{\log t} dt + o(x)
    $$
\end{lemma}
\begin{proof}
In the event $g_{c(x)}$ is not differentiable at $c(x)$ (but still continuous there), for $y < c(x)$ we will use the improper integral
$$
\int_y^\infty g'_{c(x)}(t) dt \coloneqq \bigp{\int_y^{c(x)} + \int_{c(x)}^\infty} g'_{c(x)}(t) dt = -g_{c(x)}(y)
$$
to simplify notation. Using summation by parts and applying Lemma \ref{lem:primapprox} twice
    \begin{align*}
        \sum_{p \le x} g_{c(x)}(p) &= - \sum_{p \le x} \int_2^\infty g'_{c(x)}(t) 1_{t \ge p} dt= -\int_2^\infty g'_{c(x)}(t) \tau(x \wedge t) dt\\
        &= -\int_2^x g'_{c(x)}(t) \tau(t) dt - \tau(x) \int_x^\infty g'_{c(x)}(t) dt\\
        &= \tau(x) g_{c(x)}(x) - \int_2^x g'_{c(x)}(t) \tau(t) dt \\
        &= g_{c(x)}(x)\int_2^x \frac{1}{\log t} dt + O\bigp{g_{c(x)}(x) xe^{-a\sqrt{\log x}}} \\
        &- \int_2^x g'_{c(x)}(t) \int_2^t \frac{1}{\log s} ds dt + O\bigp{\int_2^x g'_{c(x)}(t)te^{-a\sqrt{\log t}} dt} \\
    \end{align*}
    for some positive $a$. Integrating by parts again the first term on the last line
    $$
    \int_2^x g'_{c(x)}(t) \bigp{\int_2^t \frac{1}{\log s} ds} dt = g_{c(x)}(x)\bigp{\int_2^x \frac{1}{\log s} ds} - \int_2^x \frac{g_{c(x)}(t)}{\log t} dt
    $$
    and bounding the error term
    $$
    \int_2^x g'_{c(x)}(t)te^{-a\sqrt{\log t}} dt \le xe^{-a\sqrt{\log x}} \int_2^x g'_{c(x)}(t) dt = g_{c(x)}(x)xe^{-a\sqrt{\log x}} - g_{c(x)}(2)xe^{-a\sqrt{\log x}}.
    $$
    Thus by assumption that $g_{c(x)}(x), g_{c(x)}(2)$ are small, all three error terms combine to be $o(x)$ and
    \begin{align*}
        \sum_{p \le x} g_{c(x)}(p) &= g_{c(x)}(x)\int_2^x \frac{1}{\log t} dt - \bigp{g_{c(x)}(x)\bigp{\int_2^x \frac{1}{\log s} ds} - \int_2^x \frac{g_{c(x)}(t)}{\log t} dt} + o(x) \\
        &= \int_2^x \frac{g_{c(x)}(t)}{\log t} dt + o(x)
    \end{align*}
\end{proof}

The last ingredients to prove Theorem \ref{thm:mf} are results of Granville and Soundararajan on solutions to integral equations. Considering any $\chi: [0,\infty) \to [0,1]$, \cite{granville2001spectrum}*{Theorem 3.3} asserts that there is a unique solution  $\sigma: [0, \infty) \to [0,1]$ such that
$$
u\sigma(u) = (\sigma \ast \chi)(u).
$$

\begin{proposition}[\cite{granville2001spectrum}*{Proposition 1}]\label{prop:spectrum}

Let $\sigma,\chi$ be as above and $g_x: \NN \to [0,1]$ a multiplicative function such that $g_x(n) = 1$ for $n \le x$. Let $\cV(x) = \sum_{p \le x} \log p$ and define
$$
\chi(u) = \chi_{g_x}(u) = \frac{1}{\cV(x^u)}\sum_{p \le x^u} g_x(p) \log p
$$
Then $\chi(t)$ takes values in $[0,1]$ and $\chi(t) = 1$ for $t \in [0,1]$. Furthermore
$$
\frac{1}{x^u} \sum_{p \le x^u} g_x(p) = \sigma(u) + O\bigp{\frac{u}{\log x}}.
$$
\end{proposition}

\begin{proof}[Proof of Theorem \ref{thm:mf}]
    First note that the equivalence trivially holds for $u = 0$ by the initial conditions $\pi(0) = \rho(0) = 1$. It remains to verify the equality for $u > 0$. Taking $\chi = \rho$ we have that $\sigma = \pi$ is unique and well defined. Note that if we set $g_x(p) = \rho \bigp{\frac{\log p}{\log x}}$ multiplicative then $g_x(n) = 1$ for $n \le x$ by the initial conditions of $\rho$, and similarly $g_x$ takes values in $[0,1]$. Thus by Proposition \ref{prop:spectrum} it suffices to verify that
    $$
    \rho(u) = \lim_{x \to \infty} \frac{1}{\cV(x^u)} \sum_{p \le x^u} \rho\bigp{\frac{\log p}{\log x}} \log p 
    $$
    as the limit may then be passed by bounded convergence inside the convolution. It is a classical result that the Chebyshev summatory function has asymptotics
    $$
    \cV(x) = x + o(x)
    $$
    (more precise bounds are known but not needed for our purposes, see Wikipedia for further references). Now we check that, for $x > 1$, $g_x(y) = \rho\bigp{\frac{\log y}{\log x}} \log y$ satisfies the assumptions of Lemma \ref{lem:intapprox}. Indeed the Dickman function is continuous and differentiable everywhere except at $1$. A simple upper bound for $\rho$ is $\rho(x) \le \frac{1}{\floor{x}!}$, so we also have that for any fixed $x$ and sufficiently large $y$
    $$
    g_x(y) \le \bigp{\floor{\frac{\log y}{\log x}}!}^{-1} \log y \to 0.
    $$ 
    Finally if we fix any $u > 0$ and set the index function $c(x) = x^{1/u}$, then $g_x(x^u) = u\rho(u)\log x =o\bigp{e^{\eps \sqrt{u \log x}}}$ for every positive $\eps$ and $g_x(2) \le 1$ for sufficiently large $x$. Applying Lemma \ref{lem:intapprox} then gives
    \begin{align*}
        \sum_{p \le x^u} \rho\bigp{\frac{\log p}{\log x}} \log p &= \int_2^{x^u} \rho\bigp{\frac{\log t}{\log x}} dt + o(x^u)
    \end{align*}
    so we are left with (using an improper integral as in the proof of Lemma \ref{lem:intapprox})
    \begin{align*}
        &\lim_{x \to \infty} \frac{\int_2^{x^u} \rho\bigp{\frac{\log t}{\log x}} dt + o(x^u)}{x^u + o(x^u)} = \lim_{x \to \infty} \frac{1}{x^u} \int_2^{x^u} \rho\bigp{\frac{\log t}{\log x}} dt \\
    &= \lim_{x \to \infty} \frac{1}{ux^{u-1}} \bigp{ux^{u-1}\rho(u) - \int_2^{x^u} \rho'\bigp{\frac{\log t}{\log x}}\frac{\log t}{x \log^2 x} dt}
    \end{align*}
   to show the limit is $\rho(u)$ we bound the improper integral. Noting that $\rho'$ is nonpositive where it exists
   \begin{align*}
       \int_2^{x^u} -\rho'\bigp{\frac{\log t}{\log x}}\frac{\log t}{x \log^2 x} dt &\le \frac{-u}{x \log x} \int_2^{x^u} \rho'\bigp{\frac{\log t}{\log x}} dt \\
       &= \frac{u}{x \log x} \bigp{\rho \bigp{\frac{\log 2}{\log x}} - \rho \bigp{u}} \\
       &\le \frac{u}{x \log x} \to 0.
   \end{align*}
\end{proof}

\section{Rates of convergence (proof of theorem \ref{thm:sqconv}) and extension}\label{sec:convpf}

\subsection{Coupling}\label{sec:couple}

This section provides the key coupling methods used in the proofs of Theorems \ref{thm:sqconv} and \ref{thm:nonnormlim}. We start with briefly recalling the Feller coupling, which gives a way to directly sample the cycle type of a permutation using a binary sequence without dealing at all with the permutation itself.

Let $\xi_1, \xi_2, \dots$ be independent such that $\pr{\xi_i = 1} = 1/i$ and $\xi_i = 0$ otherwise. We call the pattern $\xi_i \xi_{i+1} \dots \xi_{i+k} = 10^{k-1}1$ a $k$-space starting at $i$. Let $A = (A_1,A_2,\dots)$ and $Z = (Z_1,Z_2,\dots)$ be such that $A_k$ denotes the number of $k$-spaces in $\Xi_n \coloneqq 1 \xi_2 \xi_3 \dots \xi_n 1$ and $Z_k$ denotes the number of $k$-spaces in $\Xi \coloneqq 1 \xi_2 \xi_3 \dots$. Then the Feller coupling asserts that the number of $k$-cycles in a uniform permutation from $S_n$ is distributed as $A_k$ \cites{najnudel2020feller, diaconis2024poisson}, $A_k \to Z_k$ as $n \to \infty$, and that $Z_k \sim \Poi(1/k)$ independently \cite{ignatov1982constant}.

We will also use a normalized version of the Feller coupling, that allows one to obtain the normalized cycle lengths from a Poisson process on $[0,1]$. Looking at the binary sequence $\Xi_n$ on the unit interval, with $\xi_i$ marking $(i-1)/n$, the occurence of $1$'s may be thought of as a process where the lengths of time between arrivals gives a partition of unity arising from the permutation. That is
$$
\cA_n \coloneqq \bigp{\frac{n}{n}, \dots, \frac{n}{n}, \frac{n-1}{n},\dots,\frac{n-1}{n},\dots, \frac k n, \dots, \frac k n, \dots, \frac 1 n, \dots, \frac 1 n}
$$
is a partition of unity arising from $A$ (and hence $\Xi_n$) where parts of size $\frac k n$ are repeated with multiplicity $A_k$. The Feller coupling gives that this partition is equal in distribution to one defined by normalizing cycle lengths of a uniform permutation. We go one step further and find a partition equal in distribution to $\cA_n$ that we can couple to the Poisson process $X$ on the interval $t \in [0,1]$ with rate $1/t$. This Poisson process can intuitively be thought of as the limit $Z_k$ ``rescaled'' to the unit interval.

Let $x_1 \ge x_2 \ge \dots \in [0,1]$ be the arrival times of $X$ and $X[a,b] = \abs{\set{x_i \in [a,b]}}$. Note that
$$
\pr{X\bigb{\frac{j-1}{n}, \frac{j}{n}} = 0} = \exp \bigp{-\int_{\frac{j-1}{n}}^{\frac{j}{n}} \frac{1}{t} dt} = 1- \frac{1}{j}
$$
which is exactly $\pr{\xi_j} = 0$. It is known \cite{ignatov1982constant} that the partition $\Sigma \coloneqq (1-x_1, x_1-x_2,x_2-x_3,\dots)$ is Poisson-Dirichlet distributed, and indeed the above computation can show that $1-x_1$ is uniformly distributed in the interval, linking this to the stick breaking interpretation in Section \ref{sec:intro}; this coupling can equivalently be carried out by coupling uniform with discrete uniform. To couple round $y_i = \frac
{\floor{nx_i}}{n}$ and let $\Sigma_n \coloneqq (1-y_1,y_1-y_2,\dots)$ with at most $n$ nonzero entries. From the above computation $\Sigma_n \overset{d}{=} \cA_n$ and thus is also equal in distribution to the random partition arising from the normalized cycle lengths of a uniform $\sigma \in S_n$. In conclusion, rounding points of $X$ gives a coupling between the Poisson-Dirichlet $\Sigma$ and the cycle partition $\Sigma_n$ of a uniform $\sigma \in S_n$.

\subsection{Proof of Theorem \ref{thm:sqconv}}\label{sec:mainproof}

This section first proves Theorem \ref{thm:sqconv} and then generalizes to $\Gamma^n \rtimes S_n$ for arbitrary $\Gamma \le S_k$ with only $n \to \infty$. A similar generalization is easily achieved in the same way for $S_k^n \rtimes \Gamma$ for $\Gamma \le S_n$ and $k \to \infty$ but we do not write out the details here. It should be noted that if one desires only a weak convergence statement of Theorem \ref{thm:sqconv} without rates then coupling is not needed; the analogous statement for uniform random permutations from $S_k$ and $S_n$ can be bootstrapped to give convergence for $S_k^n \rtimes S_n$. 

We must formalize the space of partitions of unity introduced in Section \ref{sec:part}. One generally hopes to define partitions to be invariant under permutation, but doing so makes them hard to compare and operate with. We follow the lead of past literature and sort them in nonincreasing order, stating results for ordered partitions as vectors. Define the infinite simplex and its subset of nonincreasing vectors
\begin{align*}
    \bar{\Delta} &\coloneqq \set{x = (x_1,x_2,\dots): x_i \ge 0, \text{ and } \sum_{i=1}^\infty x_i = 1}, \quad \Delta \coloneqq \set{x \in \bar{\Delta}: x_1 \ge x_2 \ge \dots}
\end{align*}
There is then a natural projection from $\bar{\Delta} \to \Delta$ that simply sorts a vector. We will think of a partition of unity as being both the preimage of an $x \in \Delta$ (like a multiset) as well as $x$ itself, depending on what is more convenient. Thus if it is asserted that $x = y$ for two partitions that are not necessarily nonincreasing, what is really meant is that their projections to $\Delta$ are equal.

Some key features of the coupling $(\Sigma_n,\Sigma)$ in section \ref{sec:couple} are
\begin{equation}\label{eq:coupprop}
    \norm{\Sigma - \Sigma_n}_\infty \le 1/n, \quad \EE\norm{\Sigma - \Sigma_n}_1 \le \frac{3(\log n + \gamma)}{n} + O(1/n^2)
\end{equation}
Both follow from definition of $\Sigma_n$ as distances between rounded points $y_i$, whereas $\Sigma$ is the distances between unrounded points $x_i$ (define $x_0 = y_0 = 1$). Recall that 
\begin{equation}\label{eq:rounding}
    0 \le x_i - y_i = x_i - \frac{\floor{n x_i}}{n} \le \frac{1}{n}.
\end{equation}
The first part of \eqref{eq:coupprop} is immediate, so consider the second. For a given $i \in [n]$ define
$$
I(i) \coloneqq \left[\frac{(i-1)}{n}, \frac{i}{n}\right), \quad X(i) \coloneqq \set{k: x_k \in I(i)}.
$$
Note that $X(i)$ may be empty and that if it is not among $k \in X(i)$ we have $y_{k-1}-y_k = 0$ unless $k = \min X(i)$, ie $x_k$ is the rightmost point in $I(i)$. This is because for all $k \in X(i)$ not the minimum $x_{k},x_{k-1} \in I(i)$ so they both round to the same point and $y_{k-1} = y_{k}$ (see Figure \ref{fig:coup} but only consider points on the top edge). Then the number of these rightmost points $x_k$ is distributed exactly as the number of nonzero parts $y_{k-1}-y_k$ of $\Sigma_n$, which we denote $\abs{\Sigma_n}$. We now bound the contribution to $\norm{\Sigma - \Sigma_n}_1$ from parts $x_{k-1}-x_k$ with $k \in X(i)$. The total length of such parts where $k \neq \min X(i)$ is at most $\frac 1 n$, and for $k=\min X(i)$ by \eqref{eq:rounding} we have
$$
\abs{(x_{k-1}-x_k) - (y_{k-1}-y_k)} \le \frac 1 n.
$$
Summing over the contribution from $I(i)$ for each $i$ gives
$$
\norm{\Sigma_n - \Sigma}_1 \le \frac{2}{n} \abs{\Sigma_n}.
$$
Using that the expected number of cycles in a uniform permutation from $S_n$ is the harmonic number $H_n$ we have
$$
\EE\norm{\Sigma_n - \Sigma}_1 \le \frac 2 n \EE\abs{\Sigma_n} = \frac{2}{n} H_n \le \frac{2(\log n + \gamma)}{n} + O(1/n^2).
$$

In order to extend this coupling to wreath products we need any pairing function $\NN^2 \to \NN$ to collapse a sequence of partitions into a single partition. To make an explicit choice take $g(i,j) = 2^{i-1}(2^j-1)$ for $i,j \in \NN$. Then define $f: \Delta^\NN \to \Delta$ given by
$$
    f(a,b_1,b_2,\dots)_{g(i,j)} = a_i \cdot (b_i)_j
$$
We may now couple $C$ and $\cP$. Write $\sigma = (g_1,g_2,\dots,g_n;h)$ as in Section \ref{sec:intro} and let 
\\
$a_1,a_2,\dots,a_m$ be the cycles of $h$. For $i \in [m]$, with $a_i = (e,h(e),h^2(e),\dots,h^{\abs{a_i}}(e))$, let $(b_i)_1,(b_i)_2,\dots$ be the cycle lengths of $g_{e} \circ g_{h(e)} \circ g_{h^2(e)} \circ \dots \circ g_{h^{\abs{a_i}}(e)}$. Then the product structure established in Section \ref{sec:erdtur} gives that $C$ is the same as $f(a,b_1,b_2,\dots,b_m)$ if $a = \bigp{\frac{\abs{a_1}}{n}, \frac{\abs{a_2}}{n},\dots,\frac{\abs{a_m}}{n}}$ and $b_i = \bigp{\frac{\abs{(b_i)_1}}{k}, \frac{\abs{(b_i)_2}}{k},\dots}$.

It thus suffices to bound the distance between $f(a,b_1,b_2,\dots)$ and $\cP$, where $a \sim \Sigma_n$ and $b_i \sim \Sigma_k$ iid. Note also that $\cP = f(\Sigma,\Sigma,\dots)$ by \eqref{eq:stickprodstruct} and \eqref{eq:squareprodstruct}. Applying the above coupling $(\Sigma_n,\Sigma)$ for $a$ (distributed $\Sigma_n$) and each of $b_1,b_2,\dots$ (distributed iid $\Sigma_k$) yields a grand coupling $(a,\Sigma_a), (b_1,\Sigma_{b_1}),(b_2, \Sigma_{b_2}),\dots$ where $\Sigma_a, \Sigma_{b_i}$ are iid $\Sigma$ (Figure \ref{fig:coup}). By \eqref{eq:coupprop} $\norm{a - \Sigma_a}_\infty \le 1/n$ and $\norm{b_i - \Sigma_{b_i}}_\infty \le 1/k$ for every $i$. It follows that
$$
\norm{f(a,b_1,b_2,\dots) - f(\Sigma_a,\Sigma_{b_1},\Sigma_{b_2},\dots)}_\infty \le \frac{1}{k} + \frac{1}{n}
$$
Now considering $i$ arbitrary we have
$$
\norm{a_i b_i - (\Sigma_a)_i \Sigma_{b_i}}_1 \le \abs{a_i - (\Sigma_a)_i} + a_i \norm{b_i - \Sigma_{b_i}}_1
$$
Summing this over all $i$ gives
$$
\norm{f(a,b_1,b_2,\dots) - f(\Sigma_a,\Sigma_{b_1},\Sigma_{b_2},\dots)}_1 \le \norm{a - \Sigma_a}_1 + \sum_i a_i \norm{b_i - \Sigma_{b_i}}_1
$$
and finally taking expectations and using that $b_i$ are iid and independent of $a$
\begin{align*}
    d_{L^1}(C,\cP) &= \EE\norm{f(a,b_1,b_2,\dots) - f(\Sigma_a,\Sigma_{b_1},\Sigma_{b_2},\dots)}_1 \\
    &\le \EE\norm{a - \Sigma_a}_1 + \sum_i \EE a_i \EE \norm{b_i - \Sigma_{b_i}}_1 \\
    &= \EE\norm{a - \Sigma_a}_1 +  \EE \norm{b_1 - \Sigma_{b_1}}_1 \EE \sum_i a_i \\
    &\le \frac{2}{n}H_n + \frac{2}{k}H_k
\end{align*}

\begin{figure}[h]
    \centering
    \includegraphics[scale=0.6]{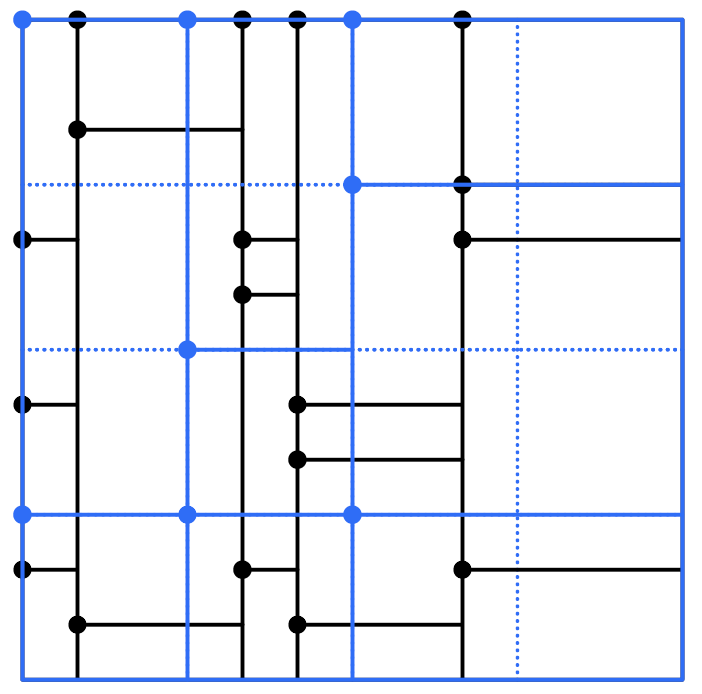}
    \caption{Coupling the square partition defined by the continuous time Poisson process $\Sigma$ (black) with the partition $\Sigma_n$, with break points at multiples of $1/n$, for $n=4$. In reality $\Sigma$ has infinitely many parts a.s. Given a blue rectangle in $\Sigma_n$ with top left corner $y$ we compare it with the black rectangle in $\Sigma$ with top left corner $x$ where $x$ is the rightmost and then bottom most point across from $y$ in the dashed blue square with top left corner $y$.}
    \label{fig:coup}
\end{figure}

Finally for the lower bound note that
$$
nk\norm{a_i b_i - (\Sigma_a)_i \Sigma_{b_i}}_1 = \sum_{j=1}^\infty \abs{na_i \cdot k (b_i)_j - n(\Sigma_a)_i \cdot k(\Sigma_{b_i})_j} \ge \sum_{j=1}^\infty \norm{n(\Sigma_a)_i \cdot k(\Sigma_{b_i})_j}_\ZZ
$$
where $\norm{\cdot}_\ZZ$ denotes distance to the nearest integer, since $na_i \cdot k(b_i)_j \in \ZZ$. To handle this term we use a result of Kingman \cite{kingman1975random} that
$$
\EE_\Sigma \sum_{i=1}^\infty \phi(\Sigma_i) = \int_0^1 \phi(t) \frac{1}{t} dt
$$
for any nonnegative $\phi$ that makes the right hand side convergent, which can be seen from the Poisson process construction of the Poisson-Dirichlet \cites{arratia2006tale, ignatov1982constant}. Then
\begin{align*}
    \psi(x) &\coloneqq \EE_\Sigma \sum_{i=1}^\infty \norm{x \cdot k\Sigma_i}_\ZZ = \int_0^1 \norm{x \cdot ky}_\ZZ \frac{1}{y} dy \ge \sum_{m=1}^{\floor{kx-1/2}} \int_{m-1/2}^{m+1/2} \frac{\norm{z}_\ZZ}{z} dz \\
    &= \sum_{m=1}^{\floor{kx-1/2}} \int_{m-1/2}^{m} \frac{m-z}{z} dz + \int_{m}^{m+1/2} \frac{z-m}{z} dz = \sum_{m=1}^{\floor{kx-1/2}} m\log\bigp{\frac{m^2}{m^2-1/4}}\\
    &= \sum_{m=1}^{\floor{kx-1/2}} -m\log\bigp{1-\frac{1}{4m^2}}
\end{align*}
so since $\Sigma_a, \Sigma_{b_1},\Sigma_{b_2},\dots$ are iid copies of $\Sigma$
\begin{align*}
    nkd_{L^1}(C,\cP) &= \EE \sum_{i=1}^\infty \psi(n\Sigma_i) = \int_0^n \psi(x) \frac{1}{x} dx \ge \sum_{m=1}^{kn-1} -m\log\bigp{1-\frac{1}{4m^2}} \int_{\frac{m+1/2}{k}}^n \frac{1}{x} dx \\
    &\sim \frac{1}{4} \bigp{\log(kn) \sum_{m=1}^{kn-1} \frac{1}{m} - \sum_{m=1}^{kn-1} \frac{\log\bigp{m+1/2}}{m}} \sim \log^2(kn)/8
\end{align*} \qed

Here is an easy extension to general wreath products. Note that $m$ is now fixed so $\cB$ is a finite partition with at most $m$ parts.

\begin{theorem}[Cycle partition convergence]\label{thm:convext}
    Let $\Gamma \subset S_m$ be arbitrary. Take $\pi \in \Gamma^n \rtimes S_n$ uniformly and let $C_1,C_2 \dots$ be the cycle lengths in $\pi$. Let $C = \bigp{\frac{C_1}{mn},\frac{C_2}{mn}, \dots}$. Then
     $$
        d_{L^\infty}(C,\cA) \le 1/n
    $$
    and
    $$
    d_{L^1}(C,\cA) \le \frac{3(\log n + \gamma)}{n} + O(1/n^2)
    $$
    where $\cA$ satisfies
     \begin{equation}\label{stickbreak}
         \cA = (U\cB,(1-U)\cA)
     \end{equation}
     and $\cB = \set{\frac{D_1}{m},\frac{D_2}{m},\dots}$ is the finite random partition distributed according to cycles $D_1,D_2,\dots$ of a uniform permutation from $\Gamma$.
\end{theorem}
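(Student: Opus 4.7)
The plan is to adapt the coupling from the proof of Theorem~\ref{thm:sqconv}, exploiting the fact that $m$ is held fixed so the ``inner'' block partitions $b_i$ can be coupled to their limiting counterparts exactly rather than approximately. First I would write $\pi = (g_1, \ldots, g_n; h)$ with $h \in S_n$ uniform and $g_i \in \Gamma$ iid uniform, all independent. The product structure of wreath cycles described in Section~\ref{sec:erdtur} gives $C = f(a, b_1, b_2, \ldots)$, where $a$ is the cycle partition of $h$ normalized by $n$ and $b_i$ is the cycle partition (normalized by $m$) of the composition $g_e \circ g_{h(e)} \circ \cdots \circ g_{h^{\abs{a_i}-1}(e)}$ for $e$ a representative of the $i$-th cycle of $h$. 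Since $\Gamma$ is a subgroup, the product of independent uniforms from $\Gamma$ is again uniform in $\Gamma$, so each $b_i$ is distributed as $\cB$; and because distinct cycles of $h$ involve disjoint sets of $g_j$'s, the $b_i$ are mutually independent. For indices $i$ past the number of cycles of $h$ I would pad with independent iid copies of $\cB$, which does not affect $C$ since $a_i = 0$ there.

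Unrolling the self-similarity $\cA = (U\cB, (1-U)\cA)$ as in \eqref{eq:stickprodstruct} gives $\cA = f(\Sigma, \cB_1, \cB_2, \ldots)$ with $\Sigma$ Poisson--Dirichlet and $\cB_i$ iid copies of $\cB$, all independent. The coupling then takes $(a,\Sigma)$ from the Feller coupling of Section~\ref{sec:couple} (so that $\norm{a - \Sigma}_\infty \le 1/n$ and $\EE\norm{a - \Sigma}_1 \le 3(\log n + \gamma)/n + O(1/n^2)$) and sets $\cB_i = b_i$, which is permissible because both sides are iid $\cB$.

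Under the pairing $g(i,j) = 2^{i-1}(2^j - 1)$, the partitions $f(a,b_1,\ldots)$ and $f(\Sigma,b_1,\ldots)$ have matching entries $a_i(b_i)_j$ and $\Sigma_i(b_i)_j$, so their $L^\infty$ distance is at most $\sup_i \abs{a_i - \Sigma_i} \le 1/n$ and their $L^1$ distance equals $\sum_i \abs{a_i - \Sigma_i}\sum_j (b_i)_j = \norm{a - \Sigma}_1$, since each $b_i$ sums to one. Taking expectations yields the claimed bounds. The proof is actually easier than that of Theorem~\ref{thm:sqconv} because only $a$ requires nontrivial coupling; the main thing to check is that conditional on the cycle decomposition of $h$ the $b_i$ really are iid $\cB$, which reduces to the elementary group-theoretic fact that a product of independent uniforms in a group is uniform.
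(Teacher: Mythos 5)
Your proposal is correct and takes essentially the same approach as the paper, which disposes of this theorem in two sentences by reusing the coupling from Theorem~\ref{thm:sqconv} and observing that with $m$ fixed the inner partitions $b_i$ can be coupled to the $\cB_i$ exactly, so only the $(a,\Sigma_a)$ coupling contributes error. You have just written out the details the paper leaves implicit. One small imprecision: you write that the unsorted $L^1$ distance between $f(a,b_1,\ldots)$ and $f(\Sigma,b_1,\ldots)$ \emph{equals} $\norm{a-\Sigma}_1$; since the Wasserstein metric is defined over sorted representatives in $\Delta$, sorting can only decrease this, so what you actually have is an upper bound --- which is all that is needed, but ``$\le$'' rather than ``$=$'' is the honest statement.
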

\begin{proof}
The exact same proof as Theorem \ref{thm:sqconv} but with $b_1,b_2,\dots$ iid distributed according to $\cB$. Since $\cA = f(\Sigma, \cB, \cB, \dots)$ only the coupling $(a,\Sigma_a)$ is needed.
\end{proof}

As an example consider the generalized symmetric group, known as the group of symmetries of the hypercube and which also happens to be important in understanding which permutations commute with each other \cite{diaconis2024poisson}.

\begin{example}[Generalized symmetric group partition]\label{ex:cyc}
    Consider a random permutation from $C_m^n \rtimes S_n$. Then $\cB$ has equal size parts almost surely and $\pr{\abs{\cB} = \frac{m}{l}} = \frac{\phi(l)}{m}$ for $l | m$, where $\phi(l)$, Euler's totient function, is the number of natural numbers less than $l$ that are coprime to $l$.
\end{example}

\section{Large parts}\label{sec:largeext}

This section derives recursive equations satisfied by the distribution of the $k$th largest part of general self-similar partitions, extending Theorem \ref{thm:sqlarge}. These also include partitions that do not arise as limits of random permutation cycles, and may be of independent interest. Throughout let $\cA_k$ denote the $k$-th largest part of the partition $\cA$. By convention $\cA_j = \infty$ for nonpositive $j$, $\cA_k = 0$ for any $k > \abs{\cA}$, and $1/0 = \infty$.

\begin{theorem}[$k$-th largest part]\label{thm:largerparts}
    Let $\cB$ be a random partition of unity and $\cA$ the random partition of unity defined by
    $$
    \cA = (U\cB,(1-U)\cA)
    $$
    in distribution. Then
    $$
    \phi_k(u) \coloneqq  \pr{\cA_k \le 1/u} = \frac{1}{u}  \sum_{j=0}^{k-1} \EE_{\cB} \int_{(u-1/\cB_{j+1}) \vee 0}^{u-1/\cB_{j}} \phi_{k-j}(y) dy
    $$
    with $\phi_1(u) = 1$ for $u \in [0,1]$. Equivalently
    $$
    u \phi_k(u) = \sum_{j=0}^{k-1} (S_j \ast \phi_{k-j})(u)
    $$
    where $S_j(x) = \pr{\cB_{j+1} \le 1/x < \cB_{j}}$.
\end{theorem}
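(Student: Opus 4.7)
The plan is to mimic the argument for Theorem \ref{thm:sqlarge} but to decompose the event $\{\cA_k \le 1/u\}$ according to how many large parts come from the $U\cB$ side versus the $(1-U)\cA$ side. Concretely, realize $\cA$ in distribution as the sorted union of $U\cB$ and $(1-U)\cA'$, where $\cA'$ is an independent copy of $\cA$ and $U \sim U[0,1]$ is independent of both $\cB$ and $\cA'$. The $k$-th largest part of $\cA$ is at most $1/u$ iff fewer than $k$ parts of the combined multiset exceed $1/u$, so I will partition the event by $j \in \{0,1,\dots,k-1\}$, the number of parts of $U\cB$ that strictly exceed $1/u$.

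First I would fix $j$ and rewrite the two conditions. A part $U\cB_i$ exceeds $1/u$ iff $Uu > 1/\cB_i$, so ``exactly $j$ parts of $U\cB$ exceed $1/u$'' translates to $1/\cB_j < Uu \le 1/\cB_{j+1}$, using the conventions $1/\cB_0 = 0$ and $1/\cB_{j+1} = \infty$ if $\cB$ has at most $j$ parts. Given this, the $k$-th largest part of $\cA$ lies below $1/u$ iff fewer than $k-j$ parts of $(1-U)\cA'$ exceed $1/u$, i.e.\ $(1-U)\cA'_{k-j} \le 1/u$, equivalently $\cA'_{k-j} \le 1/((1-U)u)$. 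Since $\cA'$ is independent of $(U,\cB)$, conditioning on $\cB$ and $U$ gives
\begin{equation*}
\phi_k(u) = \sum_{j=0}^{k-1}\EE_\cB\int_0^1 \1\!\set{1/\cB_j < Uu \le 1/\cB_{j+1}}\,\phi_{k-j}((1-U)u)\,dU.
\end{equation*}
Now change variables $y = (1-U)u$, $dU = -dy/u$: the indicator becomes $\1\set{1/\cB_j < u-y \le 1/\cB_{j+1}}$, and combined with $U \in [0,1]$ (equivalently $y \in [0,u]$) the $y$-range is exactly $[(u-1/\cB_{j+1})\vee 0,\; u-1/\cB_j]$, yielding the first stated identity.

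For the convolution form, I would set $x = u-y$ in the inner integral. The constraint $1/\cB_j < x \le 1/\cB_{j+1}$ is precisely the event whose probability defines $S_j(x)$, so exchanging the expectation over $\cB$ with the integral gives
\begin{equation*}
u\phi_k(u) = \sum_{j=0}^{k-1}\int_0^u \phi_{k-j}(y)\,\EE_\cB \1\set{\cB_{j+1} \le 1/(u-y) < \cB_j}\,dy = \sum_{j=0}^{k-1}(S_j \ast \phi_{k-j})(u),
\end{equation*}
with the convolution taken on $[0,\infty)$.

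The main obstacle is purely bookkeeping: carefully justifying the boundary conventions ($\cB_0 = \infty$, $\cA_j = \infty$ for $j\le 0$, $\cB_{j+1} = 0$ when $\abs{\cB} \le j$) so that the decomposition over $j$ accounts for the case when $U\cB$ supplies fewer than $j+1$ large parts, and checking that the initial condition $\phi_1(u) = 1$ on $[0,1]$ (which is just $\cA_1 \le 1$ a.s.) together with the recursion pins down each $\phi_k$ uniquely as an integral equation in $\phi_1,\dots,\phi_k$. The $k=1$ case recovers Theorem \ref{thm:sqlarge} as a sanity check, since then only the $j=0$ term appears, $\cB = \Sigma$, and $S_0(x) = \pr{\Sigma_1 \le 1/x} = \rho(x)$.
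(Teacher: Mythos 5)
Your proof is correct and takes essentially the same route as the paper: both decompose $\{\cA_k \le 1/u\}$ according to the number $j \in \{0,\dots,k-1\}$ of parts of $U\cB$ exceeding $1/u$, integrate over $U$ to get the indicator $\1\{\cB_{j+1} \le 1/(uU) < \cB_j\}$ against $\pr{\cA_{k-j} \le 1/((1-U)u)}$, then change variables (the paper via $y = uU$ followed by $y \mapsto u - y$, you directly via $y = (1-U)u$) and apply Fubini for the convolution form. The only differences are cosmetic.
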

\begin{proof}
    \begin{align*}
        \pr{\cA_k \le 1/u} &= \pr{(U\cB,(1-U)\cA)_k \le 1/u} = \EE_{\cB} \PP_{U,\cA}\bigp{(U\cB,(1-U)\cA)_k \le 1/u} \\
        &= \EE_{\cB} \int_0^1 \sum_{j=0}^{k-1} 1_{\cB_{j+1} \le \frac{1}{ut} < \cB_j}\pr{\cA_{k-j} \le \frac{1}{u(1-t)}} dt\\
        &= \frac{1}{u} \sum_{j=0}^{k-1} \EE_{\cB} \int_{1/\cB_{j}}^{(1/\cB_{j+1}) \wedge u} \pr{\cA_{k-j} \le \frac{1}{u-y}} dy\\
        &= \frac{1}{u} \sum_{j=0}^{k-1} \EE_{\cB} \int_{(u-1/\cB_{j+1}) \vee 0}^{u-1/\cB_{j}} \phi_{k-j}(y) dy
    \end{align*}

    The second formula follows from Fubini after the third equality and then proceeding similarly.
\end{proof}

A similar approach can be used to derive a differential difference equation satisfied by the joint distribution function of the $k$ largest pieces, but this is not pursued here. In cases where $\cB$ has a bounded number of parts the expectation formula seems more natural. Combining the above with Theorem \ref{thm:convext} gives our most general limit statement for large cycles of random permutations.

\begin{corollary}[Large cycles]
    Let $\Gamma \subset S_m$ be arbitrary. Take $\pi \in \Gamma^n \rtimes S_n$ uniformly and let $C_k$ be the $k$-th largest cycle. Then
    $$
    \frac{C_k}{mn} \dconv \cA_k
    $$
    where $\phi_k(u) \coloneqq  \pr{\cA_k \le 1/u}$ satisfies the recurrence relations of Theorem \ref{thm:largerparts} with $\cB_j$ the $j$-th largest cycle of a uniform permutation from $\Gamma$.
\end{corollary}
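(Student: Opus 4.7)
The corollary is essentially a direct packaging of the two preceding results (Theorems \ref{thm:convext} and \ref{thm:largerparts}), so the plan is to verify that their hypotheses are met and that convergence of the whole partition transfers cleanly to convergence of the $k$th coordinate.

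First I would unpack the convergence statement. Theorem \ref{thm:convext} establishes, via the explicit coupling built in Section \ref{sec:couple}, that the normalized cycle partition $C = (C_1/(mn), C_2/(mn), \ldots)$ and the self-similar partition $\cA$ can be realized on the same probability space with $\|C - \cA\|_\infty \le 1/n$ almost surely, where here both partitions are understood as vectors sorted in nonincreasing order (recall the convention on $\Delta$ from Section \ref{sec:convpf}). In particular, the $k$th coordinate satisfies $|C_k/(mn) - \cA_k| \le 1/n$ deterministically under the coupling, which gives convergence in probability (hence in distribution) of $C_k/(mn)$ to $\cA_k$ for every fixed $k$. Strictly speaking one should remark that the projection $\bar\Delta \to \Delta$ that sorts a vector is $1$-Lipschitz in $L^\infty$ on each coordinate, so comparing sorted coordinates after the coupling is legitimate.

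Second, I would identify the random partition $\cA_k$ in terms of the recurrence of Theorem \ref{thm:largerparts}. Theorem \ref{thm:convext} defines $\cA$ through the distributional fixed point equation $\cA = (U\cB, (1-U)\cA)$ with $U \sim U[0,1]$ independent of $\cA$ and $\cB$, and with $\cB$ the normalized cycle partition of a uniform element of $\Gamma$. This is exactly the hypothesis of Theorem \ref{thm:largerparts}, so applying that theorem yields the advertised recurrence for $\phi_k(u) = \Pr(\cA_k \le 1/u)$ with $\cB_j$ the $j$th largest cycle length of a uniform permutation from $\Gamma$, normalized by $m$.

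There is essentially no obstacle; the only subtlety worth spelling out is the passage from $L^\infty$ Wasserstein convergence of the full partition vectors to convergence in distribution of an individual ordered coordinate. I would handle this by noting that for any $1$-Lipschitz bounded test function $f\colon [0,1] \to \RR$, the map $x \mapsto f(x_k)$ from $\Delta$ to $\RR$ is $1$-Lipschitz with respect to the $L^\infty$ metric on $\Delta$, so $d_{L^\infty}(C,\cA) \to 0$ forces $\EE f(C_k/(mn)) \to \EE f(\cA_k)$. Together with the explicit recurrence inherited from Theorem \ref{thm:largerparts}, this completes the proof.
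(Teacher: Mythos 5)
Your proposal is correct and matches the paper's intent exactly: the paper's entire "proof" of this corollary is the remark that it follows by combining Theorem \ref{thm:convext} with Theorem \ref{thm:largerparts}, and you have filled in the only nontrivial detail — that the $L^\infty$ coupling bound on the (sorted) partition vectors transfers to each fixed coordinate because sorting is $1$-Lipschitz in $L^\infty$, hence $d_{L^\infty}(C,\cA)\to 0$ yields $C_k/(mn)\dconv\cA_k$. Your identification of $\cB$ as the normalized cycle partition of a uniform element of $\Gamma$ and the subsequent appeal to Theorem \ref{thm:largerparts} for the recurrence on $\phi_k$ are exactly as intended.
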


 Taking $\Gamma = S_1$, so $\cB_1 = 1$ and $\cB_j = 0$ for $j > 1$ almost surely, recovers $\phi_k = \rho_k$ as in \cite{knuth1976analysis}, where $\rho_1$ is Dickman's function and $\rho_k$ describes the $k$-th largest normalized cycle length of a uniformly random permutation. Thus the above is a strict generalization.

 Here is another continuation of our running Example \ref{ex:cyc}

\begin{example}[$k$-th largest cycle from generalized symmetric group]\label{ex:largepart}
Consider 
\\
$\cB = \set{\frac{l}{m},\frac{l}{m},\dots,\frac{l}{m}}$ with probability $\frac{\phi(l)}{m}$ for $l | m$ ($\phi$ totient). Then the expectation formula gives, for $\psi_k(u) \coloneqq \pr{\cA_k \le 1/u}$
$$
u \psi_k(u) = \sum_{l | m} \frac{\phi(l)}{m} \int_{u-m/l}^u \psi_{k}(t)dt + \sum_{\substack{l | m \\ l > m/k}} \frac{\phi(l)}{m} \int_{0}^{u-m/l}\psi_{k-m/l}(t)dt
$$
Note that when $k=1$ the second term disappears, cleaning up the expression quite a bit. A useful identity when dealing with the totient function is that 
$$
\phi(l) = l\prod_{p | l} \bigp{1-\frac{1}{p}}
$$
Then assuming $u \ge m$ and because $\psi_1$ is nonincreasing
\begin{align*}
    u\psi_1(u) &= \sum_{l | m} \frac{\phi(l)}{m} \int_{ u-m/l}^u \psi_1(t)dt \le \sum_{l | m} \frac{\phi(l)}{m} \frac{m}{l} \psi_1(u-m/l)\\
    &= \sum_{l | m} \prod_{p | l} \bigp{1-\frac{1}{p}} \psi_1(u-m/l) \le m\psi_1(u-m)
\end{align*}
where the last inequality is rather loose. Iterating this $\ceil{u/m}$ times gives
$$
\psi_1(u) \le \frac{m^{\ceil{u/m}}}{u!_{(m)}} = \frac{m^{\ceil{u/m}}}{u(u-m)(u-2m)\dots}
$$
but we conjecture the true decay is much more rapid for large $m$. The term \\ $\sum_{l | m} \frac{\phi(l)}{m} \int_{ u-m/l}^u \psi_1(t)dt$ which determines $\psi_1(u)$ is the expectation over a random integral of the history of $\psi_1$, where the amount of time $m/l$ one looks back is sampled proportional to $\phi(l)$. Our upper bound is then the integral over the most history, rounded to simply be the area of a rectangle. However looking back this far is the lowest probability event, with mass only $1/m$, and this bound gets looser as $m$ grows.
\end{example}

While the above provides an example of bounding large $u$ behavior of these recursive functions they are clearly in general hard to deal with. It is thus natural to ask for explicit identities. The convolution formula for $k=1$ gives $\phi_1(u) = \frac{1}{u} (S_0 \ast \phi_1)(u)$ which is exactly the set of integral equations considered in Section \ref{sec:mf}. Since \cite{granville2001spectrum} actually gives an explicit representation for solutions to such equations we immediately get an expression for the distribution function of the largest part of any self-similar partition. Explicitly, with $S_0 = \psi_1$ the distribution of the largest part $\cB_1$ and $\phi_1$ the distribution of the largest part $\cA_1$ as in Theorem \ref{thm:largerparts} we have
$$
\phi_1(u) = \sum_{k=0}^\infty \frac{(-1)^k}{k!} \int_{\substack{t_1,\dots,t_k \ge 1 \\ t_1 + \dots + t_k \le u}} \frac{1-\psi_1(t_1)}{t_1} \dots \frac{1-\psi_1(t_k)}{t_k}dt_1\dots dt_k
$$
The case of $\phi_1 = \rho$, the Dickman function, is attributed to Ramanujan \cite{andrews2013ramanujan}. The formula is reminiscent of the probability mass function of the number of cycles of a given length in a uniform permutation \cite{goncharov1944some}, and both can be proved with inclusion-exclusion. One may also obtain formulas for $\phi_1$ by way of Fourier inversion of the Laplace transform, which is easy to compute if one knows the Laplace transform for $\psi_1$ (see comments at the end of section \ref{sec:mf}). It is possible some of these formulas could be used to obtain asymptotics.

\appendix

\section{Convergence of cycle counts}\label{app}

Below is a non-normalized version of Theorem \ref{thm:sqconv} with a Compound Poisson limit. The Poisson random variables in the limit exhibit an interesting dependence structure.

\begin{theorem}[Cycle count convergence]\label{thm:nonnormlim}
    Pick $\sigma\in S_k^n \rtimes S_n$ uniformly at random and let $a_i(\sigma)$ for $i \in [kn]$ be the number of cycles of length $i$ in $\sigma$. Then, as both $k,n \to \infty$, the joint distribution of $\{a_i(\sigma)\}_{i=1}^{kn}$ converges (weakly) to the law of $\{A_i\}_{i=1}^\infty$ with 
    $$
     A_i = \sum_{kl = i} \sum_{j=1}^\infty j X_{l,k,j}
    $$
    where $X_{l,k,j} \sim \Poi\bigp{\frac{p^k_j}{l}}$ with $p^{k}_j$ the Poisson PMF with parameter $1/k$ evaluated at $j$. $\set{X_{l,k,j}}_j \cup \set{X_{l',k',j}}_j$ are mutually independent if $l \neq l'$, but otherwise may be dependent. Consequently
    $$
    \EE[A_i] = \frac{\sigma_0(i)}{i}, \quad \Var(A_i) = \frac{\sigma_0(i)}{i} + \frac{1}{i}\sum_{l|i} \frac{1}{l}
    $$
    where $\sigma_0(i)$ is the number of divisors of $i$.
\end{theorem}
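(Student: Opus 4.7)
The plan is a two-level Feller coupling, combining the one-dimensional coupling from Section~\ref{sec:couple} at both the outer level $h \in S_n$ and at each inner composition in $S_k$. The cycle product structure from Section~\ref{sec:erdtur} gives the deterministic identity
\begin{equation*}
a_i(\sigma) = \sum_{dm = i} \sum_{j=1}^{\infty} j \, X_{d,m,j},
\end{equation*}
where $X_{d,m,j}$ counts the $d$-cycles of $h$ whose associated composition $g_e \circ g_{h(e)} \circ \cdots \circ g_{h^{d-1}(e)} \in S_k$ contains exactly $j$ cycles of length $m$. (In the notation of the theorem statement, $d$ and $m$ play the roles of $l$ and $k$.)

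First I would apply the Feller coupling in $S_n$ to $h$: letting $B_d$ denote the number of $d$-cycles of $h$, the joint distribution $(B_d)_d$ converges to independent $\Poi(1/d)$, with explicit couplings on any finite window of $d$. Conditional on $(B_d)$, the compositions along distinct $h$-cycles are mutually independent uniform elements of $S_k$, since $(g_1,\dots,g_d) \mapsto g_1 g_2 \cdots g_d$ is a surjection to $S_k$ with all fibers of equal size. A second Feller coupling applied to each composition gives joint convergence of its cycle-count vector to independent $\Poi(1/m)$ as $k \to \infty$, with marginal $\Pr(c_m(\pi) = j) \to p^m_j$. Conditional on $B_d = b$, the vector $(X_{d,m,j})_j$ is now obtained by classifying each of the $b$ independent compositions according to its number of $m$-cycles; the classification probabilities converge to $(p^m_j)_j$, so Poisson thinning of $B_d \approx \Poi(1/d)$ yields $X_{d,m,j} \sim \Poi(p^m_j/d)$, mutually independent over $j$ for each fixed $(d,m)$. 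Independence across different $d$ is inherited from the disjointness of the associated compositions, while dependence can remain across different $m$ for the same $d$ because one and the same composition is probed at several cycle lengths.

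For the moments, $\EE A_i = \sum_{dm=i}\sum_j j \cdot p^m_j/d = \sum_{dm=i} 1/(dm) = \sigma_0(i)/i$, using $\sum_j j p^m_j = 1/m$. For the variance, independence across $d \mid i$ gives $\Var A_i = \sum_{d \mid i} \Var(Z_d)$, where $Z_d := \sum_j j X_{d,m,j}$ with $m = i/d$ is compound Poisson, distributed as $\sum_{s=1}^{N} Y_s$ with $N \sim \Poi(1/d)$ and iid $Y_s \sim \Poi(d/i)$. The compound-Poisson identity $\Var(\sum_{s=1}^N Y_s) = \EE[N]\,\EE[Y^2]$ gives $(1/d)(d/i + d^2/i^2) = 1/i + d/i^2$; summing over $d \mid i$ yields $\sigma_0(i)/i + i^{-2} \sum_{d\mid i} d = \sigma_0(i)/i + i^{-1}\sum_{l \mid i} 1/l$, where the reparametrization $l = i/d$ produces the form stated.

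The main obstacle is keeping the two coupling layers quantitatively in sync for the simultaneous double limit $k, n \to \infty$ rather than an iterated one. Because weak convergence in the product topology on $\mathbb{R}^{\infty}$ reduces to convergence of finite-dimensional marginals, it is enough to couple any finite window of indices $(d, m, j)$; the explicit $L^1$ and total-variation bounds from the Feller coupling (the same ones that drive Theorem~\ref{thm:sqconv}) deliver this at both levels, and Poisson thinning then closes the argument.
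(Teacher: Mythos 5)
Your proposal takes essentially the same route as the paper: identify the deterministic product structure $a_i(\sigma) = \sum_{dm=i}\sum_j j X_{d,m,j}$, apply the Feller coupling at the outer ($h \in S_n$) and inner (composition in $S_k$) levels, and recover the limiting Poisson structure via splitting/thinning; the paper packages the last step as an explicit ``Infinite Poisson splitting'' lemma for a countable classification, while you invoke the equivalent Poisson-thinning picture directly. You add two helpful explicit remarks the paper leaves implicit --- that compositions along distinct $h$-cycles are independent uniform elements of $S_k$ because the product map has equal-size fibers, and that the variance formula follows cleanly from the compound-Poisson identity $\Var = \EE[N]\,\EE[Y^2]$ --- but the decomposition, the coupling, and the independence/dependence conclusions (independent across divisors $d$, possibly dependent across cycle lengths $m$ for the same $d$) all match the paper's argument.
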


First here is a lemma extending Poisson splitting when splitting into countably many processes.

\begin{lemma}[Infinite Poisson splitting]
    Let $\sum_{i=1}^\infty 1_{E_i}$ be distributed $\Poi(\lambda)$ and $Y_i$ iid with support in $\NN_0$ independent of the $E_i$. Then $\set{\sum_{i=1}^\infty 1_{Y_i = j}1_{E_i}}_{j=0}^\infty$ is a countable collection of independent $\Poi(p_j\lambda)$ sums where $p_j = \pr{Y_1 = j}$.
\end{lemma}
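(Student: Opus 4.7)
The plan is to establish the claim via a joint probability generating function computation restricted to any finite subcollection of the $N_j$'s; mutual independence of the full countable family then follows from independence of every finite subfamily.

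First I would condition on the $\sigma$-algebra $\mathcal{F} = \sigma(E_1, E_2, \ldots)$. Under $\mathcal{F}$, both the total $N := \sum_{i} \mathbf{1}_{E_i}$ and the active index set $A := \{i : E_i \text{ holds}\}$ are determined. Since the $Y_i$ are iid and (implicitly) independent of $\mathcal{F}$, the random variables $\{Y_i\}_{i \in A}$ are themselves iid with $\Pr(Y_1 = j) = p_j$, and so the infinite vector $(N_0, N_1, N_2, \ldots)$, conditioned on $\mathcal{F}$, has the multinomial distribution with $N$ trials and cell probabilities $(p_j)_{j \geq 0}$.

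Next I would compute the joint PGF of any finite subcollection. Fix $J = \{j_1, \ldots, j_m\} \subset \{0, 1, 2, \ldots\}$ and $z_1, \ldots, z_m \in [0,1]$. Lumping every outcome outside $J$ into a single residual cell of probability $1 - \sum_{l=1}^m p_{j_l}$ gives
$$
\mathrm{E}\left[\prod_{l=1}^m z_l^{N_{j_l}} \;\Big|\; \mathcal{F}\right] = \left(1 - \sum_{l=1}^m p_{j_l}(1 - z_l)\right)^N
$$
so that taking expectations and using $\mathrm{E}[w^N] = e^{\lambda(w-1)}$ yields
$$
\mathrm{E}\left[\prod_{l=1}^m z_l^{N_{j_l}}\right] = \exp\left(\lambda \sum_{l=1}^m p_{j_l}(z_l - 1)\right) = \prod_{l=1}^m \exp\bigl(\lambda p_{j_l}(z_l - 1)\bigr)
$$
which is exactly the product of the PGFs of independent $\mathrm{Poi}(\lambda p_{j_l})$ variables. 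This simultaneously identifies each marginal law and the joint independence across any finite $J$, and passing to the infinite family is automatic by the standard fact that independence of an arbitrary family reduces to independence of every finite subfamily.

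The main obstacle I anticipate is really bookkeeping rather than substance: one must be mildly careful that the lumping step is valid with countably many cells and that $N_j < \infty$ almost surely (immediate since $N_j \leq N < \infty$ a.s.). However, since only the finitely many cells in $J$ carry non-unit factors in the product, the remaining categories collapse into $1 - \sum_{l=1}^m p_{j_l}$ without ever needing convergence of a conditionally divergent series, and the argument reduces to the usual finite Poisson splitting applied to the three-color split into $j_1, \ldots, j_m$, and "other."
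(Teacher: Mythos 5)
Your proof is correct and takes essentially the same route as the paper's: condition on the total count, observe the conditional multinomial structure, lump all cells outside a chosen finite set into a residual category, and then sum out (Poissonize) to get a factorized answer. The only difference is the computational tool — you work with joint probability generating functions whereas the paper manipulates the joint probability mass function directly — which is a presentation choice rather than a genuinely different argument.
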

\begin{proof}
    Let $X = \sum_{i=1}^\infty 1_{E_i}$ and $X_j = \sum_{i=1}^\infty 1_{Y_i = j}1_{E_i}$. Let $k_1,\dots,k_m \in \NN$ be arbitrary, $k = \sum_{i=1}^m k_i$, and $p = \sum_{j=1}^m p_j$. Then
    \begin{align*}
        \pr{X_1=k_1,\dots,X_m = k_m} &= \sum_{n=k}^\infty \pr{X_1=k_1,\dots,X_m = k_m,X=n} \\
        &= \sum_{n=k}^\infty \frac{e^{-\lambda}\lambda^n}{n!}{n \choose k_1,\dots,k_m, n-k} p_1^{k_1}\dots p_m^{k_m}(1-p)^{n-k}\\
        &= \prod_{j=1}^m \frac{e^{-\lambda p_j}(\lambda p_j)^{k_j}}{k_j!} \sum_{n=k}^\infty \frac{e^{-\lambda(1-p)}(\lambda(1-p))^{n-k}}{(n-k)!}\\
        &= \prod_{j=1}^m \frac{e^{-\lambda p_j}(\lambda p_j)^{k_j}}{k_j!}
    \end{align*}

Since this holds for any $m$ we conclude. Indeed taking $m=1$ gives marginal Poisson distribution and arbitrary $m$ gives independence of the countable collection.
\end{proof}

\begin{proof}[Proof of Theorem \ref{thm:nonnormlim}]
    With notation as in Section \ref{sec:couple}, let $E, F^1, F^2, \dots$ be independent copies of $\Xi$, $E_{k,i}$ the event that there is a $k$-space starting at $i$ in $E$, and $F^i_{k,j}$ the event that there is a $k$-space starting at $j$ in $F^i$. Let $Y^k_i = \sum_{j=1}^\infty 1_{F^i_{k,j}}$, the number of $k$-spaces in $F^i$, which is $\Poi(1/k)$ by Ignatov's theorem \cite{ignatov1982constant}. The coupling and product observation at the start of Section \ref{sec:erdtur} then give that
    $$
    a_i \dconv \sum_{\substack{kl = b}} \sum_{i=1}^\infty Y_i^k 1_{E_{l,i}}
    $$
    so it suffices to show that
    $$
    \sum_{\substack{kl = b}} \sum_{i=1}^\infty Y_i^k 1_{E_{l,i}} \overset{d}{=} \sum_{kl = b} \sum_{j=1}^\infty j X_{l,k,j}
    $$
    Ignatov's theorem also says
    \begin{align}
        \sum_{i=1}^\infty 1_{E_{l,i}} \overset{d}{=} X_l \label{eq:poilimit}
    \end{align}
    where $X_l \sim \Poi(1/l)$ independently. By Fubini we have
    $$
    \sum_{\substack{kl = b}} \sum_{i=1}^\infty Y_i^k 1_{E_{l,i}} = \sum_{\substack{kl = b}} \sum_{j=0}^\infty j \sum_{i=1}^\infty 1_{Y^k_i=j} 1_{E_{l,i}}
    $$
    Now by infinite Poisson splitting
    $$\bigp{\sum_{i=1}^\infty 1_{Y^k_i=j} 1_{E_{l,i}}}_{j=0}^\infty \overset{d}{=} \bigp{X_{l,k,j}}_{j=0}^\infty$$
    and are independent across $l,j$. Indeed the sequences of random variables $\set{Y^k_i}_{i=1}^\infty$ are iid across $i$, so splitting holds. The expectation and variance formulas are straightforward computations using the mean and variance of a Poisson twice each.
\end{proof}

\bibliographystyle{amsplain}
\bibliography{WreathPart.bib}

\end{document}